\newbox\mybox
\def\overtag#1#2#3{\setbox\mybox\hbox{$#1$}\hbox to
  0pt{\vbox to 0pt{\vglue-#3\vglue-\ht\mybox\hbox to \wd\mybox
      {\hss$\ss#2$\hss}\vss}\hss}\box\mybox}
\def\undertag#1#2#3{\setbox\mybox\hbox{$#1$}\hbox to 0pt{\vbox to
    0pt{\vglue#3\vglue\ht\mybox\hbox to \wd\mybox
      {\hss$\ss#2$\hss}\vss}\hss}\box\mybox}
\def\lefttag#1#2#3{\hbox to 0pt{\vbox to 0pt{\vss\hbox to
      0pt{\hss$\ss#2$\hskip#3}\vss}}#1}
\def\righttag#1#2#3{\hbox to 0pt{\vbox to 0pt{\vss\hbox to
      0pt{\hskip#3$\ss#2$\hss}\vss}}#1}
\let\ss\scriptstyle
\def\Dot{\lower.2pc\hbox to 2pt{\hss$\bullet$\hss}}
\def\Circ{\lower.2pc\hbox to 2pt{\hss$\circ$\hss}}
\def\Vdots{\raise5pt\hbox{$\vdots$}}
\newcommand\lineto{\ar@{-}}
\newcommand\dashto{\ar@{--}}
\newcommand\dotto{\ar@{.}}
\newtheorem{theorem}{Theorem}[section]
\newtheorem{lemma}[theorem]{Lemma}
\theoremstyle{definition}
\newtheorem{definition}[theorem]{Definition}
\newtheorem{remark}[theorem]{Remark}
\newtheorem*{remark*}{Remark}
\newtheorem{example}[theorem]{Example}
\newtheorem{Method}{Method}
\newcommand{\PAL}{\operatorname{PPL}}
\newcommand{\PA}{\operatorname{PAL}}
\newcommand{\analytic}{algebraic}
\newcommand{\Brieskorn}{Brieskorn-Pham}
\def\case#1{\par\smallskip\noindent{\bf Case #1.}}
\begin{document}

\title{Principal analytic link theory in homology sphere links}
\author{A. N\'emethi}
\author{Walter D Neumann}
\author{A. Pichon}
\begin{abstract}
  For the link $M$ of a normal complex surface singularity $(X,0)$ we
  ask when a knot $K\subset M$ exists for which the answer to
  whether $K$ is the link of the zero set of some analytic germ
  $(X,0)\to (\mathbb C,0)$ affects the analytic structure on
  $(X,0)$. We show that if $M$ is an integral homology sphere then
  such a knot exists if and only if $M$ is not one of the Brieskorn
  homology spheres $M(2,3,5)$, $M(2,3,7)$, $M(2,3,11)$.
\end{abstract}
\maketitle

\section{Principal analytic link theory}

Let $M$ be a normal surface singularity link. In
particular, $M$ is a closed $3$--manifold which can be given by a
negative definite plumbing.

There may exist many different complex analytic structures on the
cone $C(M)$, i.e., many analytically different normal surface
singularities $(X,0)$ whose links $L_X$ are homeomorphic to $M$. Our aim
is to understand these different analytic structures from the point of
view of the ``{principal analytic link theory}'' on $M$.
  
A link or multilink $L = m_1 K_1\cup \ldots \cup m_r K_r \subset
M=L_X$ is \emph{\analytic} if $(M,L)$ is the link $(M,L)=(L_X,L_C)$ of
a germ pair $(X,C,0)$ consisting of a normal surface germ and a (not
necessarily reduced) complex curve through the singular point $0\in X$
(this was called ``analytic'' in \cite{NP}).  This is a topological
property: $L$ is \analytic{} if the $K_i$ are $\Bbb S^1$-fibres in a
negative definite plumbing decomposition of $M$ obtained by possibly
applying blow-ups to the minimal negative definite plumbing of $M$.

We say $L$ is \emph{principal analytic for $X$} if there exists 
a holomorphic germ $f\colon (X,0)
\rightarrow (\Bbb C,0)$ such that the pair $(M,L)$ is homeomorphic to
the link $(L_X,L_f)$ of the pair $(X,f^{-1}(0))$, taking account of 
multiplicities. 

We say that $L = m_1 K_1\cup \ldots \cup m_r K_r \subset M$ is
\emph{potentially principal} if there exists a normal surface germ $X$
with link $L_X=M$ for which $L$ is principal analytic.

According to (\cite{NP}, Theorem 2.1), the potential principality of
an algebraic
multilink $L \subset M$ is a 
topological property which is equivalent to any one of the following
\begin{itemize}
\item The multilink $(M,L)$ is fiberable;
\item  $[L] = 0$ in $H_1(M;\Bbb Z)$ (note that $[L]$ is always torsion);
\item  $I^{-1}\bf b$ is an integral vector, where $I$ is
the intersection matrix for the plumbing and $\bf b$ the vector whose
entry corresponding to a plumbing component is the sum of
multiplicities of components of $L$ that are fibres of that component.
\end{itemize}

When $M$ in the link of a rational singularity, then a potentially
principal multilink $(M,L)$ is principal analytic for every analytic
structure $(X,0)$ (\cite{Artin}).  The same conclusion holds when $M$ is
the link of a minimally elliptic singularity and $L$ is a knot
(\cite[Lemma p.~ 112]{Reid}).

In \cite{NP}, we gave several examples of surface singularity links
$M$ whose principal analytic link theory is sensitive to the analytic
structure in the following sense: for each analytic structure $(X,0)$
on $C(M)$, there exists a potentially principal knot in $M$ which is not
principal analytic for this structure. In fact, we gave examples of
pairs of potentially principal links, where the principality
of each obstructed the principality of the other.

The aim of this paper is to show that when $M$ is an integral homology
sphere ($\Bbb Z$HS) this behaviour is general, except in the rational
and minimally elliptic cases. Our technique consists of constructing a
set of principal analytic knots $K_1,\ldots,K_n$ which are not
compatible, {i.e.}, which cannot be realized by germs $f_i\colon (X,0)
\rightarrow (\Bbb C,0)$ from the same analytic structure
$(X,0)$.

\begin{example} Let $V(p,q,r) := \{(x_1, x_2,x_3) \in {\mathbb C}^3
  ~|~ x_1^p+x_2^q + x_3^{r} = 0\}$ with $p,q,r$ pairwise coprime. Its
  link $M=M(p,q,r)$ is a $\mathbb Z$--homology sphere with three
  singular fibres $K_1,K_2,K_3$ realized as principal analytic knots
  by $K_i=M \cap \{x_i = 0\}$.

  Let $K$ be the $(2,1)$-cable on $K_3\subset M(2,3,13)$. It is a
 potentially principal knot in $M= M(2,3,13)$.
  Let $(Z,p)$ be an analytic structure on the cone $C(M)$ such that
  $K_3$ is realized by a holomorphic function $f_3: (Z,p) \to (\mathbb
  C,0)$. Then $K$ is not realized by any $f\colon (Z,p) \to (\mathbb
  C,0)$ on $(Z,p)$ (\cite{NP}, 3,1).
\end{example}

Before stating more precisely the main result, let us generalize the
notion of principal analytic multilink, and say what we mean by
the principal analytic link theory of a surface singularity link $M$.
  
\begin{definition} A \emph{coloured multilink} in $M$ is the data of
  an algebraic multilink $L \subset M$ with a partition of its
  components: $L = L_1\coprod \ldots \coprod L_n$.
\end{definition}

\begin{definition}
  A coloured multilink $L = L_1\coprod \ldots \coprod L_n\subset M$ is
  \emph{principal analytic} for a normal surface singularity $(X,0)$
  with link $L_X=M$ if there exist analytic germs $f_i\colon (X,0)
  \rightarrow (\Bbb C,0)$, $i=1,\ldots,n$ such that
\begin{enumerate}
\item\label{it:1} the pair $(M,L)$ is homeomorphic to $(L_X,L_f)$, where $f =
  f_1\ldots f_r$;
\item\label{it:2} each $(M,L_i)$ is homeomorphic to
  $(L_X,L_{f_i})$ (note that this does not imply (1)---see Remark
  \ref{rk:isotopic knots}).
\end{enumerate}
We say $L$ is \emph{potentially principal} if it is principal analytic
for some analytic structure $(X,p)$. 
\end{definition}

Of course, the potential principality of each link $L_i$ is a
necessary condition for the potential principality of $L$. But it is
not sufficient when $n \geq 2$, as shown by the examples of
incompatible knots mentioned above: the coloured link $K_1\coprod
\ldots \coprod K_n$ is not potentially principal, but each component
is.  That is, the knots $K_1,\ldots,K_n$ can be realized by functions
$f_i\colon (X_i,0) \rightarrow (\Bbb C,0), i=1,\ldots,n$ defined on
some analytic structures $(X_i,0)$ on the cone $C(M)$, but the
$(X_i,0)$ cannot have the same analytical type.  So, although the
multilink $L = K_1\cup \ldots \cup K_n$ can be realized by a function
$f\colon (X,0) \rightarrow (\Bbb C,0)$ for some $(X,0)$, there is
no $(X,0)$ and $f$ such that $f$ splits into a product $f=f_1 \ldots
f_n$ with $f_i\colon (X,0) \rightarrow (\Bbb C,0)$ realizing the knot
$K_i$.

\vskip0,3cm
Given $M$, let us denote by $\PAL(M)$ the set of
potentially principal coloured multilinks $L$ in $M$;
we call $\PAL(M)$ the principal analytic theory on $M$. Given
a normal surface singularity $(X,0)$ with link $M$, we denote by
$\PA(X)\subset\PAL(M)$ the set of coloured links $L$ in
$M$ which are principally analytic for $(X,0)$. So
$\PAL(M)=\bigcup_{L_X\cong M}\PA(X)$.

The study of the principal analytic link theory on $M$ consists of the
two following natural questions:
\begin{enumerate}
\item Describe the set $\PAL(M)$;
\item describe the subsets $\PA(X)$ for $(X,0)$ realizing $M$.
\end{enumerate}

The unique rational singularity with $\Bbb Z$HS link is $(V(2,3,5),0)$
with link $M(2,3,5)$. There are only two $\Bbb Z$HS links which
belong to minimally elliptic singularities: $M(2,3,7)$ and
$M(2,3,11)$. Our main result, which is a first important step in this
program, is as follows:
 
\begin{theorem}
  \label{main} Let $M$ be a $\Bbb Z$HS singularity link
  which is not homeomorphic to $M(2,3,5)$, $M(2,3,7)$ or  $M(2,3,11)$. 
Then there exists an \analytic{} coloured link $L = K_1
  \coprod \ldots \coprod K_n$ which is
  not in $\PAL(M)$ and such that:
\begin{enumerate}
\item Each $K_i$ is a potentially principal knot
\item $\forall i \neq j$, $(M,K_i)$ is  not homeomorphic to $(M,K_j)$.
\end{enumerate}
\end{theorem}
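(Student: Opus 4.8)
The plan is to exhibit, for each admissible $M$, a coloured link whose colours are cables of a single potentially principal knot sitting over a node of the splice diagram of $M$, and then to show that realizing any one of these cables by a holomorphic germ already obstructs realizing the base knot (or the other cables) on the \emph{same} analytic structure. The first observation is that since $M$ is a $\Bbb Z$HS we have $H_1(M;\Bbb Z)=0$, so by the characterization of \cite{NP} recalled above every algebraic knot $K\subset M$ automatically has $[K]=0$ and is therefore potentially principal. Condition (1) of the statement is thus free, and the entire content of the theorem lies in producing pairwise non-isotopic knots $K_1,\dots,K_n$ no single $(X,0)$ can realize simultaneously with a product splitting. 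In particular condition (2) will be automatic once the $K_i$ are chosen distinct.

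I would fix a node $v$ of the minimal good resolution graph of $M$ and take a base knot $K_0$ represented by an $\Bbb S^1$-fibre associated to $v$, realized as principal analytic on the splice(-quotient) structure on $C(M)$ (this is where the end-curve/fibre realizations on the splice structure are used). Because $M$ is neither $M(2,3,5)$, $M(2,3,7)$ nor $M(2,3,11)$, the singularity is neither rational nor minimally elliptic, and the Seifert data attached to $v$ is large enough that there is a whole range of cabling coefficients $(a_i,b_i)$ for which the $(a_i,b_i)$-cables $K_i$ of $K_0$ are again algebraic and mutually non-isotopic. Distinctness of the pairs $(M,K_i)$ is read off from the cabling invariants (for instance from the multivariable Alexander polynomial or the splice-diagram weights of $(M,K_i)$), which detect the coefficients $(a_i,b_i)$; this is the routine part. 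The delicate choice is to place the $(a_i,b_i)$ in the ``obstructed range'' dictated by $v$, exactly as in the example above where the $(2,1)$-cable of $K_3$ in $M(2,3,13)$ is obstructed once $K_3$ is realized.

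The core of the argument is the incompatibility, for which I would invoke and generalize the obstruction of \cite{NP} underlying the example above: if $K_0$ is realized by a germ $f_0$ on some $(X,0)$, then the analytic structure forces a semigroup/vanishing-order condition on any germ that could cut out a cable of $K_0$, and for the chosen coefficients this condition fails. Concretely, realizing $K_i$ by $f_i$ would require the characteristic (Puiseux-type) data of the cable to lie in the value semigroup dictated by the structure through which $K_0$ is realized, and the large-node hypothesis lets us choose the $(a_i,b_i)$ outside every such semigroup; symmetrically, realizing any $K_i$ forces $K_0$ to be realized with a semigroup incompatible with $K_j$ for $j\neq i$. Hence no single $(X,0)$ can realize two of the $K_i$ compatibly, so $L=K_1\coprod\dots\coprod K_n\notin\PAL(M)$ while each $K_i\in\PAL(M)$.

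The main obstacle, and the step deserving the most care, is to make this incompatibility \emph{uniform over all analytic structures} on $C(M)$ rather than over the splice structure alone: the obstruction must be phrased so that it applies to every $(X,0)$ realizing the relevant base knot, using only the fixed discrete data (resolution graph, Seifert invariants, cabling coefficients) and the constraints these impose on the admissible divisor classes and their sections on an arbitrary resolution. This is also precisely where the exclusion of the three exceptional spheres enters: for those, potential principality implies principality for \emph{every} analytic structure---the rational case by \cite{Artin} and the minimally elliptic case for knots by \cite{Reid}---so the semigroup gap we exploit collapses and no incompatible family can exist. Thus the argument both uses the hypothesis $M\neq M(2,3,5),M(2,3,7),M(2,3,11)$ and explains why exactly these three must be excluded.
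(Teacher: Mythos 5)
There is a genuine gap: the central incompatibility step is asserted rather than proved, and the precise obstruction you would need is never identified. You write that ``the analytic structure forces a semigroup/vanishing-order condition on any germ that could cut out a cable of $K_0$,'' but no such statement holds for an arbitrary $(X,0)$ with only the base knot realized. The paper's actual engine is the Neumann--Wahl End-Curve Theorem: if \emph{every} end-knot of a splice diagram for $M$ is cut out by a function on $(X,0)$, then $(X,0)$ is of splice type and the diagram must satisfy the (purely combinatorial) semigroup condition. This is why the coloured link in the paper is forced to consist of \emph{all} the end-knots of a suitably cabled splice diagram -- not, as in your proposal, a family of cables of a single base knot $K_0$. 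With your link, realizing the $K_i$ gives no control over the other end-curves, so neither the End-Curve Theorem nor the alternative tool (Method 1, i.e.\ Buchweitz--Greuel's $\mu=2\delta$ applied to the value semigroup on $K_n$, which likewise requires realizing a whole auxiliary family of knots to pin down the semigroup) applies. The remaining content of the paper's proof, which your sketch omits entirely, is the combinatorial verification that outside $M(2,3,5)$, $M(2,3,7)$, $M(2,3,11)$ one can always choose a cabling coefficient $s$ (typically a $(2,s)$-cable on the largest-weight leaf at an end-node, with $s\approx 2rA_{n-1}+1$) making the edge-determinant condition hold while the semigroup condition fails; this occupies several pages of case analysis, including a delicate reduction when every end-node is a $(2,3,r)$-node.

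Two smaller points. First, your claim that condition (2) is ``automatic once the $K_i$ are chosen distinct'' conflates non-isotopy with non-homeomorphism of the pairs $(M,K_i)$; the latter must be read off from the splice-diagram/cabling data, as the paper implicitly does by using leaves of distinct weights and a cable with first coefficient $2$. Second, your explanation of the three excluded manifolds is correct in spirit (rational and minimally elliptic links admit no such obstruction for knots, by Artin and Reid), but in the proof itself the exclusion enters concretely: e.g.\ for $(p,q)=(2,3)$ one needs $r\ge 13$ for the chosen $s$ to violate the semigroup condition, which is exactly where $r=5,7,11$ drop out.
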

(Of course, since $M$ is a $\Bbb Z$--homology sphere, the potential
principality of $K_i$ is
automatic.)

\section{Two constructions of non-PPL coloured links}

In this section, we present through examples two methods (Methods
\ref{method 1} and \ref{method 2}) to construct some coloured links
$L$ in a given $M$ such that $L \notin \PAL(M)$ but each $L_i$ is in
$\PAL(M)$. The first one, which was introduced in \cite{NP}, could be
used in any $M$, whereas the second is only available in a $\Bbb Z$HS.

\begin{Method}[Using the delta invariant of a reduced
  curve]\label{method 1}
Let $K$ be a fibred knot in $M$, and let $\Phi\colon M \setminus K
\rightarrow \Bbb S^1$ be an open-book fibration with binding $K$.  We
set $$\mu(K) = 1-\chi (\Phi^{-1}(t))\,,$$ where $ t \in \Bbb S^1$ and
where $\chi$ denotes the Euler characteristic. Notice that $\mu(K)$
does not depend on the choice of $\Phi$, and that it can be computed
from any plumbing graph of $(M,K)$ (or any splice diagram if $M$
is a $\Bbb Q$HS).
  
Let $ K_1\coprod \dots \coprod K_{n}$, $n \geq 2$ be a coloured link
whose components $K_i$ are potentially principal knots with
multiplicity $1$. For each $i=1,\ldots,n-1$, let $\Phi_i\colon M
\setminus K_i \rightarrow \Bbb S^1$ be a fibration of $K_i$.  We
consider the coloured multilink $L= K_1\coprod \ldots \coprod K_{n-1}$
and we define the semigroup ${\Gamma}(L,K_n)$ as the semigroup
generated by the degrees of the maps $\Phi_i$ on the knot
$K_n$. Notice that these degrees do not depend on the $\Phi_i$'s and
can be computed from any plumbing graph of $(M, K_1\coprod \ldots
\coprod K_{n}) $. We denote by $\delta({ L},K_n)$ the number of gaps
in ${\Gamma}({ L},K_n)$, i.e., the number of positive integers that
are not in $\Gamma(L,K_n)$.
\begin{lemma}\label{le:1}
  If  $ L \coprod K_n \in\PAL(M)$ then
$$ \mu(K_n) \leq 2   \delta({  L},K_n)\,.$$  
\end{lemma}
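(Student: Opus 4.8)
The plan is to pass from the topological hypothesis to its analytic realization, reinterpret the generators of $\Gamma(L,K_n)$ as values of functions restricted to the branch $C_n=f_n^{-1}(0)$, and then control the Milnor number through the classical relation $\mu=2\delta$ between the topology of the Milnor fibre and the delta-invariant of a curve germ.

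First I would unwind the hypothesis $L\coprod K_n\in\PAL(M)$: by definition there is a normal surface germ $(X,0)$ with $L_X=M$ together with holomorphic germs $f_1,\dots,f_n\colon (X,0)\to(\mathbb C,0)$ realizing the colours, so $K_i=L_{f_i}$ for $i<n$ and $K_n=L_{f_n}$. Put $C_i=f_i^{-1}(0)$; since $K_n$ is a knot, $C_n$ is irreducible, and I fix a normalization $\nu\colon(\mathbb C,0)\to(C_n,0)\subset(X,0)$ with parameter $t$. The first key step is the analytic identification of the generators of $\Gamma(L,K_n)$: the degree $d_i$ of $\Phi_i=f_i/|f_i|$ along $K_n$ is the linking number $\operatorname{lk}(K_i,K_n)$, which, since $M$ is a $\mathbb Z$HS and the links are \analytic{}, equals the intersection multiplicity $(C_i\cdot C_n)_0=\operatorname{ord}_t(f_i\circ\nu)$. (The $K_i$ are disjoint components, so $C_i$ and $C_n$ share no branch and $f_i|_{C_n}\not\equiv0$.)

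Next I would introduce the value semigroup $\Gamma_{C_n}=\{\operatorname{ord}_t(g\circ\nu):g\in\mathcal O_{X,0},\ g|_{C_n}\not\equiv0\}$ of the branch $C_n$. As each $f_i\in\mathcal O_{X,0}$, the previous step gives $d_i\in\Gamma_{C_n}$, whence $\Gamma(L,K_n)=\langle d_1,\dots,d_{n-1}\rangle\subseteq\Gamma_{C_n}$. Every gap of $\Gamma_{C_n}$ is then a gap of $\Gamma(L,K_n)$, so that $\delta(C_n)\le\delta(L,K_n)$, where $\delta(C_n)$ is the delta-invariant of $C_n$, equal to the number of gaps of $\Gamma_{C_n}$ because $C_n$ is irreducible.

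It then remains to prove $\mu(K_n)\le 2\,\delta(C_n)$, and this is the step I expect to be the main obstacle. When $X$ is smooth this is Milnor's equality $\mu=2\delta$ for the plane branch $C_n$; in general $C_n$ is a space curve on the singular germ $X$, so only an inequality can be expected. I would prove it by writing $\mu(K_n)=1-\chi(F_n)$ for the Milnor fibre $F_n$ of $f_n$ and computing $\chi(F_n)$ on a simultaneous resolution of $X$ and of $C_n$ via A'Campo's formula; the difference $2\delta(C_n)-\mu(K_n)$ should then appear as a contribution of the surface singularity along the exceptional divisor, which I expect to be non-negative. Granting this, the chain $\mu(K_n)\le 2\delta(C_n)\le 2\delta(L,K_n)$ finishes the proof. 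The delicate point is precisely controlling the sign of that correction: $C_n$ need not be Gorenstein and $\Gamma_{C_n}$ need not be symmetric, so the clean smooth-surface identity is genuinely unavailable and some positivity input tied to the normal surface singularity is required.
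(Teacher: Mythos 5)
Your reduction to the curve $C_n=f_n^{-1}(0)$ is exactly the paper's: realize the colours by germs $f_1,\dots,f_n$ on some $(X,0)$, identify the degrees of the $\Phi_i$ on $K_n$ with the values $\operatorname{ord}_t(f_i\circ\nu)$, conclude $\Gamma(L,K_n)\subseteq\Gamma_{C_n}$ and hence $\delta(C_n)\le\delta(L,K_n)$. But the step you yourself flag as the main obstacle --- comparing $\mu(K_n)$ with $2\delta(C_n)$ --- is precisely the content of the lemma, and you do not prove it; you only sketch an A'Campo-type computation and concede that you cannot control the sign of the correction term. As written, the argument is incomplete at its essential point.

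The gap is closed in the paper by the theorem of Buchweitz and Greuel (reference [BG]): for \emph{any} reduced curve singularity, not just a plane branch, one has $\mu=2\delta-r+1$, where $\mu$ is the Milnor number defined via $\omega_C/d\mathcal O_C$, which for a smoothable curve equals $1-\chi$ of the smoothing fibre. Here $C_n$ is irreducible ($r=1$) and is smoothed by the fibres $f_n^{-1}(t)$, which are the pages $\Phi^{-1}(t)$ of the open book of $K_n$; hence $\mu(K_n)=1-\chi(\Phi^{-1}(t))=\mu(C_n)=2\delta(C_n)$ exactly, with no correction term to estimate. So your instinct that ``only an inequality can be expected'' for space curves on a singular $X$ is mistaken in the relevant sense, and the worry about $C_n$ failing to be Gorenstein is beside the point: symmetry of the value semigroup governs whether $\delta$ is half the conductor, not whether $\mu=2\delta$ holds. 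With [BG] in hand your own chain $\mu(K_n)=2\delta(C_n)\le 2\delta(L,K_n)$ completes the proof; without it, the proposal does not establish the lemma.
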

\begin{proof}Let $(X,0)$ be such $L \coprod K_n \in\PA(X)$ and let
  $f_j\colon(X,0)\rightarrow (\Bbb C,0)$ be a holomorphic germ with
  link $K_j$ for $j=1,\dots,n$. Then $\mu(K_n) = \mu(f_n)$, the Milnor
  number of $f_n$.  According to \cite{BG}, one has $\mu(f_n) = 2
  \delta(f_n)$, where $\delta(f_n)$ denotes the
  \emph{$\delta$-invariant} of the curve $f_n^{-1}(0)$. Recall that
  $\delta (f_n)$ counts the number of gaps in the semigroup
  $\Gamma(f_n)$ generated by the all the multiplicities of the
  holomorphic germs $g\colon (X,0)\rightarrow (\Bbb C,0)$ along the
  curve $f_n^{-1}(0)$.  Moreover, if $g$ is such a germ then this
  multiplicity is the degree of the Milnor fibration $g/|g|$
  restricted to the link of $f_n^{-1}(0)$. Since $K_1,\dots,K_{n-1}$
  can be realized by germs $f_1,\dots, f_{n-1}$, we have
  $\Gamma(L,K_n)\subset\Gamma(f_n)$, so $ \delta(f_n) \leq
  \delta(L,K_n)$.
\end{proof}
\end{Method}

\begin{example}[Non-$\PAL$ coloured link]
  Let $M$ be the link of the \Brieskorn{} singularity
  $z_1^3+z_2^4+z_3^5=0$ and and let $K_i$, $i=1,2,3$ be the end-knots
  in $M$ corresponding to $z_i=0$. Let us consider the $(2,5)-$cabling
  $K$ on the link $K_3$ of $z_3=0$. Its splice diagram is as follows.
$$\xymatrix@R=15pt@C=24pt@M=0pt@W=0pt@H=0pt{
  {\ss K_1~}\ar@{<-}[r]&\lineto[r]^(.5){3}&\Circ
  \ar@{->}[dd]_(.4){4}
  \ar@{-}[rr]^(.25){5}^(.75){5}&&\Circ \ar@{->}[dd]^(.4)1
  \ar@{->}[rr]^(.25){2}&& {~\ss K_3}\\
  \\
  &&\lefttag{}{K_2}{3pt}&&\righttag{}{K}{4pt}
}$$ \vskip6pt\noindent
 The semigroup ${\Gamma}( K_1 \coprod K_2
  \coprod K,K_3)$, being generated by $3, 4$ and $5$, has two missing
  numbers, whereas $\mu(K_3)=(3-1)(4-1)=6>4$.

  Thus, by Lemma \ref{le:1}, the coloured link $L= K_1 \coprod K_2
  \coprod K_3 \coprod K$ is not realized on any $(X,0)$, i.e., $L
  \notin \PAL(M)$
\end{example}

\begin{Method}[Using the semigroup condition]\label{method 2}
Method 2 is based on the so-called End-Curve Theorem for $\Bbb
Z$HS links:

\begin{theorem} [\cite{NW2}, theorem 4.1] Let $(X,0)$ be a normal
  surface singularity with $\Bbb Z$HS link $M$. Let $\Delta$ be a
  splice diagram of $M$ such that $\Delta$ is the minimal splice
  diagram of the pair $(M,L)$, where $L$ denotes a link whose
  components are the end-knots of $\Delta$.

  Assume that for each of the end leaves of $\Delta$, there exits a
  function $z_i\colon (X,0) \rightarrow (\Bbb C,0)$ whose link is the
  corresponding end-knot. Then:
\begin{enumerate}
\item The graph $\Delta$ satisfies the semigroup condition;
\item $X$ is a complete intersection  of embedding dimension $\leq n$;
\item the functions $z_1,\ldots,z_n$ generate the maximal ideal of the
  local ring ${\mathcal O}_{(X,0)}$, and $X$ is a complete intersection of
  splice type with respect to these generators.\qed
\end{enumerate}
\end{theorem}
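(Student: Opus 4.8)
The plan is to use the collection of end-curve functions $z_1,\dots,z_n$ to build, one node at a time, a weight filtration on the local ring $\mathcal O=\mathcal O_{X,0}$, and then to read off both the semigroup condition and the splice-type structure from the associated graded ring. I would work on a good resolution $\pi\colon\widetilde X\to X$ whose dual graph refines $\Delta$, so that every node $v$ of $\Delta$ corresponds to an exceptional curve $E_v$, and I would use the divisorial valuation $\mathrm{ord}_v=\mathrm{ord}_{E_v}$ on $\mathcal O$. The first step is intersection-theoretic and purely topological: since $z_i^{-1}(0)$ has link the $i$-th end-knot, the total transform of $z_i$ has coefficient $\ell_{vi}$ along $E_v$, where $\ell_{vi}$ is the splice-diagram linking number of the $i$-th leaf with $v$. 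Hence $\mathrm{ord}_v(z_i)=\ell_{vi}$ for every node $v$ and leaf $i$; because $M$ is a $\Bbb Z$HS these numbers are integers, and the $v$-weight of a monomial $\prod_i z_i^{a_i}$ is $\sum_i a_i\,\ell_{vi}$.

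Next I would pass to the associated graded ring with respect to the simultaneous (multi-node) filtration. Call a monomial \emph{admissible at $v$ for the edge $e$} if it involves only the variables $z_w$ with $w$ a leaf beyond $e$; the content of the splice-type equations is that at each node $v$ of valence $\delta_v$ there are $\delta_v-2$ relations, each equating weighted-homogeneous combinations of such admissible monomials of a common $v$-weight. The structural claim to establish is that the leading forms of the $z_i$ in the associated graded ring satisfy exactly these node relations. I would derive this from the fact that $X$ is two-dimensional: at each node of valence $\ge 3$ the three or more leading forms coming from the distinct directions cannot be algebraically independent, and the lowest-weight relation among them is forced into node-homogeneous shape. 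Producing, for each edge $e$ at $v$, an admissible monomial of the required common weight using only the variables beyond $e$ is then possible \emph{precisely} when that weight lies in the numerical semigroup generated by the linking numbers $\ell_{vw}$ of the leaves $w$ beyond $e$ — this is the semigroup condition, giving conclusion (1).

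With the semigroup condition in hand, I would invoke Neumann--Wahl's construction of splice-type equations: the admissible-monomial node equations, with the coefficients dictated by the actual leading forms of the $z_i$, form a regular sequence cutting out a complete intersection $X_{\mathrm{spl}}\subset(\Bbb C^n,0)$ of embedding dimension $\le n$ whose link is $M$; this is the template for conclusions (2) and (3). I would then show that the ring homomorphism $\mathcal O_{X_{\mathrm{spl}}}\to\mathcal O$ sending the $j$-th coordinate to $z_j$ is an isomorphism. Surjectivity is exactly the assertion that the $z_i$ generate the maximal ideal of $\mathcal O$, and isomorphism follows by comparing associated graded rings, both of which are the semigroup-condition graded ring determined combinatorially by $\Delta$, via an equality of Poincar\'e series.

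The hard part will be the last two structural points — that the leading forms genuinely obey the node relations, and that the $z_i$ generate the maximal ideal with no extra generators needed — equivalently, that the two associated graded rings coincide. I expect to handle these by induction on the number of nodes of $\Delta$, \emph{splicing} the diagram along an edge adjacent to an extremal node: the end-curve functions restrict to end-curve data on the two smaller spliced pieces, the inductive hypothesis supplies their structure, and one glues the graded information back together, using the $\Bbb Z$HS hypothesis to avoid any character or abelian-cover bookkeeping. Controlling the higher-order terms so that the leading forms actually cut out $X_{\mathrm{spl}}$ rather than merely its tangent cone, and verifying the Poincar\'e-series equality, is where the genuine technical weight of the argument lies.
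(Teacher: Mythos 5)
A preliminary remark: the paper itself contains no proof of this statement. It is the End-Curve Theorem for $\Bbb Z$HS links, imported verbatim from \cite{NW2} (the QED box closes the statement, not an argument), so your proposal can only be measured against Neumann--Wahl's original proof. Against that benchmark your outline reproduces the correct skeleton: the identity $\operatorname{ord}_v(z_i)=\ell_{vi}$ does follow from unimodularity of the negative definite intersection matrix exactly as you say; the weight filtrations at the nodes, the comparison of $\mathcal O_{X,0}$ with the splice-type model through associated graded rings, and induction by splicing the diagram are indeed the mechanisms of the original argument.

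As a proof, however, the proposal has a genuine gap --- in fact a circularity --- at the step yielding conclusion (1). In the splice framework the semigroup condition is precisely the statement that admissible monomials of $v$-weight $d_v$ exist for each edge at $v$; it is a prerequisite for even writing down splice equations. Your argument runs the other way: you posit that the lowest-weight relation among the leading forms ``is forced into node-homogeneous shape,'' i.e., is a sum of one admissible monomial per edge, and then read off the semigroup condition from the existence of those monomials. But $v$-homogeneity does not force admissibility: a monomial of $v$-weight $d_v$ may mix variables from several branches at $v$, and nothing in your sketch forces each branch to contribute a pure-branch monomial, so the existence of some relation does not place $d_v$ in the semigroup generated by the $\ell_{vw}$ with $w$ beyond a given edge. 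The semigroup condition must be (and in \cite{NW2} is) established by a direct argument from the existence of the end-curve functions, logically prior to any construction of equations, since without it the admissible monomials your relations are supposed to consist of need not exist. Worse, to know the leading forms are algebraically dependent in the relevant weight you already invoke that the associated graded ring is two-dimensional with the combinatorially predicted Poincar\'e series --- exactly the item you defer to the end as ``the genuine technical weight,'' together with the maximal-ideal generation underlying (2) and (3). Since the two steps you acknowledge as open are the theorem's entire content, what you have is a faithful map of the terrain rather than a proof.
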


This result furnishes an alternative argument to prove that the $L$ of
the previous example does not belong to $\PAL(M(3,4,5))$. Indeed, if
$L \in \PA(X)$ for some analytic structure $(X,0)$ on $C(M)$ then
each leaf of the splice diagram of the figure is realized by a
function $(X,0) \rightarrow (\Bbb C,0)$, so, by the End-Curve Theorem,
$(X,0)$ is splice. But the semigroup condition is not realized at the
right hand node, as $5 \notin \langle 3,4\rangle$.

More generally, \emph{if a splice diagram does not
satisfy the semigroup condition, the coloured link consisting
of all end-knots for the diagram  is not in $\PAL(M)$ by the
End-Curve Theorem, so Theorem  \ref{main} is proved in
this case}.  

Not all splice diagrams for $\mathbb Z$HS singularity links satisfy
the semigroup conditions. Nevertheless, Method 2 gives a 
short proof of a weak version of Theorem \ref{main}:
\end{Method}

\begin{theorem}[Weak Version of Theorem \ref{main}]\label{weakmain} 
  Let $M$ be a $\Bbb Z$-homology sphere which is the link of a normal
  surface singularity.  If $M$ is not homeomorphic to $M(2,3,5)$ then
  there exists a coloured link $L = K_1\coprod \ldots \coprod
  K_n\notin \PAL(M)$, consisting of knots $K_i\in\PAL(M)$.
\end{theorem}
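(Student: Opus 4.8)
The plan is to deduce the statement from Method \ref{method 2}. By the End-Curve Theorem it suffices to produce, for every $\Bbb Z$HS link $M\not\cong M(2,3,5)$, a splice diagram $\Delta$ of $M$ which is the minimal splice diagram of the pair $(M,L)$, with $L$ the union of the end-knots of $\Delta$, and which fails the semigroup condition at some node. Once such a $\Delta$ is found, the coloured link $L=K_1\coprod\dots\coprod K_n$ formed by its end-knots satisfies $L\notin\PAL(M)$, while every $K_i$ is potentially principal automatically because $M$ is an integral homology sphere. Thus the whole statement reduces to a purely combinatorial existence question about splice diagrams of $M$.

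First I would start from the minimal splice diagram $\Delta_0$ of $M$. As $M$ is a nontrivial singularity link, $\Delta_0$ has at least one node, and around each node the edge weights are pairwise coprime and at least $2$. If some internal edge of $\Delta_0$ already violates the semigroup condition, I take $L$ to be the end-knots of $\Delta_0$ and I am done. The essential case is when $\Delta_0$ satisfies the condition everywhere; this happens in particular whenever $M=M(a_1,\dots,a_k)$ is Seifert, since then $\Delta_0$ is a single node, carries no internal edge, and the condition is vacuous. Here I would enlarge the diagram by cabling one leaf $K_i$: this leaves $M$ unchanged, adds a new end-knot, and inserts a new node, creating an internal edge along which the semigroup condition becomes a genuine constraint comparing the inherited weight against the semigroup generated by the remaining node weights. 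Since those weights are pairwise coprime and $\ge 2$, the generated semigroup has gaps, and one can try to arrange the new edge so that the relevant weight falls into a gap, exactly as in the example $M(3,4,5)$ above.

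The hard part will be to guarantee that the enlarged diagram is an admissible input for the End-Curve Theorem: it must be the \emph{minimal} splice diagram of an honest singularity-link pair, hence negative definite (all edge determinants positive) and reduced, and the violation must persist after reduction to minimal form. This is exactly where the excluded case intervenes, and I expect it to be the crux. For $M(2,3,5)$, the unique rational $\Bbb Z$HS link, Artin's theorem forces every potentially principal link to be principal analytic for every analytic structure, so by the End-Curve Theorem no minimal splice diagram of $M(2,3,5)$ can fail the semigroup condition; concretely, every candidate violating cabling must collapse, upon reduction, to a minimal diagram satisfying the condition. For every other $M$ the non-rationality is precisely what should allow a violating cabling to survive as a minimal negative-definite diagram, and verifying this dichotomy---that a bona fide violating minimal diagram exists if and only if $M\not\cong M(2,3,5)$---is the step I expect to demand the most care.
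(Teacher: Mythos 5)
Your strategy is the paper's own: invoke the End-Curve Theorem (Method \ref{method 2}) and produce a splice diagram for $M$, obtained by cabling at a leaf of the minimal diagram, that violates the semigroup condition at the newly created node, noting that potential principality of each component is automatic in a $\Bbb Z$HS. But the proposal stops exactly where the proof begins. The entire mathematical content of the argument is the existence of an \emph{admissible} cabling weight lying in a gap of the relevant semigroup, and you defer this step (``the step I expect to demand the most care'') rather than carry it out. Concretely: at an end-node with leaf weights $a_1<\dots<a_n$ and remaining edge weight $c$, writing $A=\prod_{i<n}a_i$ and $A_j=A/a_j$, one must exhibit $d$ such that (i) the new edge determinant $a_nd-cA$ is positive, so that the cabled diagram is a legitimate splice diagram for $M$, and (ii) $d\notin\langle A, cA_1,\dots,cA_{n-1}\rangle$. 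The paper shows that some $d\in\{cA_{n-1}+1,\,cA_{n-1}+2\}$ works except when $n=2$ and $(c,a_1)=(2,3)$ or $(c,2)$ with $c$ odd, and then disposes of the residual cases one by one with explicit choices (e.g.\ $d=3$ for the one-node diagram with weights $2,5,a_2$, and $d=1$ for $2,3,a_2$ using $a_2\ge7$). It is precisely in this case analysis that the hypothesis $M\not\cong M(2,3,5)$ is consumed; your appeal to Artin's theorem explains \emph{why} every candidate construction must fail for $M(2,3,5)$, which is a useful sanity check but not a substitute for actually producing $d$ for all other $M$.

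A smaller imprecision: you speak of cabling ``one leaf'' and of the violation needing to ``persist after reduction to minimal form.'' In the paper the leaf $K_n$ is replaced by \emph{two parallel} $(1,d)$-cablings, so the new node carries two marked leaves of the coloured link and is automatically non-redundant in the minimal splice diagram of the pair $(M,L)$; admissibility then reduces to the single edge-determinant inequality above rather than to any reduction process. As written, your text is a correct plan whose decisive quantitative step is missing, so it does not yet constitute a proof of the statement.
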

  
\begin{proof}
Let $G$ be the minimal
  splice diagram of $M$.  Let us consider an end-node of $G$ as in the
  figure below, with $a_1< \ldots <a_n$.  Let $ K_1, \ldots, K_n$ be
  end-knots as marked, and $ K_{n+1},
  \ldots, K_r$ the end-knots corresponding to the remaining leaves
  (lying in the portion $G'$). 
$$ 
\xymatrix@R=20pt@C=24pt@M=0pt@W=0pt@H=0pt{
  {G'\quad}\dashto[r]&\lineto[r]^(.5){c}&\Circ
  \ar@{->}[ddl]_(.4){a_1}\ar@{->}[ddr]^(.4){a_{n-1}}
  \ar@{->}[rr]^(.25){a_n}&&{~\ss K_n}\\
  &&\dots&\\
  &\lefttag{}{K_1}{3pt}&&\righttag{}{K_{n-1}}{4pt} }$$ 
We can assume $c>1$, since we have already proved the result if the
semigroup condition fails.
Denote
$$A=\prod_{i=1}^{n-1}a_i\,;\quad A_j=A/a_j,~j=1,\dots,n-1\,.$$
Set $\alpha = c
A_{n-1}$. Assume we can choose $d \in \{\alpha +1, \alpha
+2\}$ such that $d \notin \langle\alpha, A\rangle$.
Replace $K_n$ by two parallel $(1,d)$ cablings on $K_n$ as shown.
$$ 
\xymatrix@R=20pt@C=24pt@M=0pt@W=0pt@H=0pt{
  {G'\quad}\dashto[r]&\lineto[r]^(.5){c}&\Circ
  \ar@{->}[ddl]_(.4){a_1}\ar@{->}[ddr]^(.4){a_{n-1}}
  \ar@{-}[rr]^(.25){a_n}^(.75){d}&&\Circ \ar@{->}[ddr]^(.4)1
  \ar@{->}[rr]^(.25){1}&& {~\ss K_n}\\
  &&\dots&&&&\\
  &\lefttag{}{K_1}{3pt}&&\righttag{}{K_{n-1}}{4pt}&&\righttag{}{K}{4pt}
}$$ As $a_n d > \alpha a_{n-1}$, the splice diagram of $(M,K_n \cup
K)$ satisfies the edge determinant condition.  However, the semigroup
condition is not realized at the right hand node as
$d<cA_j$ for $j<{n-1}$ and $d \notin
\langle\alpha, A\rangle$.  Thus, by the End-Curve
Theorem, the fibered coloured link $L = K_1 \coprod \ldots \coprod K_r
\coprod K$ does not belong to $\PAL(M)$. This completes the proof,
assuming that $d$ exists.

If $d$ above does not exist then $n=2$ and $(c,a_1)=(2,3)$ or $(c,2)$
with $c$ odd. In the latter case, if $c\ge7$ then $d=c-2$ satisfies
$a_2d>a_1c$ but fails the semigroup condition, so we may assume $c=3$
or $5$.

Moreover, if $G'$ does not just consist of a single vertex, then the
smallest multiple of $a_1$ that can contribute to the semigroup is
$2a_1$ so we see that the semigroup condition still fails with $d=3$
if $(c,a_1)=(2,3)$ and with $d=c+2$ if $(c,a_1)=(3,2)$ or $(5,2)$.
Finally, for the one-node diagram $G$ with weights $2,5,a_2$ we can
use $d=3$ while for $2,3,a_2$ we can use $d=1$ (recall 
$a_2\ge 7$ in both cases
since we ruled out $M(2,3,5)$).
$$\xymatrix@R=15pt@C=20pt@M=0pt@W=0pt@H=0pt{
  {\ss K_3~}\ar@{<-}[r]&\lineto[r]^(.5){2}&\Circ
  \ar@{->}[dd]_(.4){5}
  \ar@{-}[rr]^(.25){a_2}^(.75){3}&&\Circ \ar@{->}[ddr]^(.4)1
  \ar@{->}[rr]^(.25){1}&& {~\ss K_2}\\
  \\
  &&\lefttag{}{K_1}{3pt}&&&\righttag{}{K}{4pt}
}\quad
\xymatrix@R=15pt@C=24pt@M=0pt@W=0pt@H=0pt{
  {\ss K_3~}\ar@{<-}[r]&\lineto[r]^(.5){2}&\Circ
  \ar@{->}[dd]_(.4){3}
  \ar@{-}[rr]^(.25){a_2}^(.75){1}&&\Circ \ar@{->}[ddr]^(.4)1
  \ar@{->}[rr]^(.25){1}&& {~\ss K_2}\\
  \\
  &&\lefttag{}{K_1}{3pt}&&&\righttag{}{K}{4pt}
}$$ 
\end{proof}

\begin{remark}\label{rk:isotopic knots}
  The knots $ K_n$ and $K$ are isotopic.  That is why Theorem
  \ref{weakmain} is a weak version of Theorem \ref{main}. The
  positions of isotopic knots with respect to each other can make a
  big difference. In the following two splice diagrams (with $a\ge 7$)
  the knots $K_2$, $K'_2$, $K''_2$ and $K'''_2$ are mutually isotopic.
$$\xymatrix@R=15pt@C=20pt@M=0pt@W=0pt@H=0pt{
  {\ss K_3~}\ar@{<-}[r]&\lineto[r]^(.5){2}&\Circ \ar@{->}[dd]_(.4){3}
  \ar@{-}[rr]^(.25){a}^(.75){1}&&\Circ \ar@{->}[ddr]^(.4)1
  \ar@{->}[rr]^(.25){1}&& {~\ss K_2}\\
  \\
  &&\lefttag{}{K_1}{3pt}&&&\righttag{}{K'_2}{4pt} }\quad
\xymatrix@R=15pt@C=20pt@M=0pt@W=0pt@H=0pt{ {\ss
    K_3~}\ar@{<-}[r]&\lineto[r]^(.5){2}&\Circ \ar@{->}[dd]_(.4){3}
  \ar@{-}[rr]^(.25){a}^(.75){2}&&\Circ \ar@{->}[ddr]^(.4)1
  \ar@{->}[rr]^(.25){1}&& {~\ss K''_2}\\
  \\
  &&\lefttag{}{K_1}{3pt}&&&\righttag{}{K'''_2}{4pt} }$$ As just
proved, the left colored link is not in $\PAL(M)$. But for $a=7$ or $11$
the right one is in $\PA(X)$ for every analytic structure $X$ on the cone
$C(M)$.  The reason is that $X$ is minimally elliptic and $K''_2$ and
$K'''_2$ are realized by generic hyperplane sections.
\end{remark}

\section{Proof of Theorem \ref{main} }
     
\subsection{The case of \Brieskorn{} links}
Let $M$ be the link of the \Brieskorn{} singularity
$z_1^p+z_2^q+z_3^r=0$ where $p<q<r$ are pairwise coprime integers.
Let $K_i$, $i=1,2,3$ be the end-knots in $M$ corresponding to $z_i=0$.
       
If there exists $s \in \Bbb N$ such that
$$rs > 2pq \text{ and } s\notin \langle p,q\rangle \hskip1cm (\ast),$$ 
then, using the semigroup condition (Method \ref{method 2}), one
obtains that the four-coloured link $L= K_1 \coprod K_2 \coprod K_3
\coprod K $ does not belong to $\PAL(M)$, where $K$ denotes the
$(2,s)$--cabling on $K_3$:
$$\xymatrix@R=15pt@C=24pt@M=0pt@W=0pt@H=0pt{
  {\ss K_1~}\ar@{<-}[r]&\lineto[r]^(.5){p}&\Circ
  \ar@{->}[dd]_(.4){q}
  \ar@{-}[rr]^(.25){r}^(.75){s}&&\Circ \ar@{->}[dd]^(.4)1
  \ar@{->}[rr]^(.25){2}&& {~\ss K_3}\\
  \\
  &&\lefttag{}{K_2}{3pt}&&\righttag{}{K}{4pt}
}$$
First, assume that $p >2$. The integers in the semigroup $\langle
p,q\rangle$ which are $\leq 2p+1$ belong to $\{p,q,2p,p+q \}$.  Then
if $q \notin \{ p+1, 2p+1\}$, $s = 2p+1$ satisfies condition $
(\ast)$.

If $q = p+1$, the integers in the semigroup $\langle p,q\rangle$
which are $\leq 2p+3$ belong to: $\{ p,p+1,2p,2p+1,2p+2, 3p\}$.  Then, if
$p\neq 3$, $s = 2p+3$ satisfies condition $ (\ast)$ as $2p+2
<2p+3<3p$, and if $(p,q)=(3,4)$, then one can choose $s=5$ as in the
example of section 2.

If $q=2p+1$ and $p >3$, then $s = 2p+3$ satisfies condition $ (\ast)$,
and if $(p,q)=(3,7)$, one can take $s=11$.

It remains to deal with the case that $p=2$.
 If $q>5$, then $s=5$ satisfies condition $(\ast)$. 
 If $(p,q)=(2,5)$, then $r \geq 7$ and $s=3$ satisfies $(\ast)$. If
 $(p,q)=(2,3)$, then $r \geq 13$ (as we avoid the rational case $r=5$
 and  minimally elliptic cases $r=7,11$) and $s=1$ satisfies $(*)$.

This completes the proof for \Brieskorn{} links.

\subsection{The case of a Seifert link}
 
Assume that the $3$-manifold $M$ is Seifert, or equivalently that the
minimal splice diagram $G$ of $M$ has a single node.  Let $n$ be the
number of incident leaves. We assume that $n \geq 4$, as the case $n
\leq 3$, which corresponds to the \Brieskorn{} case, has already been
treated.  Let $a_1,\ldots,a_n$ be their weights, indexed in such a way
that $a_1<\ldots<a_n$, and let $K_1,\ldots,K_n $ be corresponding
end-knots. We set:
  $$A=\prod_{i=1}^{n-1}a_i;\quad A_j=A/a_j, ~j=1,\dots,n-1\,.$$
  
We argue as in the \Brieskorn{} case: If there exists $s \in \Bbb N$
such that
$$a_ns > 2 A\quad   \text{and}\quad s\notin \langle
A_{1},\dots,A_{n-1} \rangle \hskip1cm (\ast_2),
$$ 
then the $(n+1)$-coloured link $L= K_1 \coprod \ldots \coprod K_n
\coprod K $ does not belong to $\PAL(M)$, where $K$ denotes the
$(2,s)$- cabling on $K_n$ (see figure).
$$\xymatrix@R=15pt@C=24pt@M=0pt@W=0pt@H=0pt{
  {\ss K_1~}\ar@{<-}[drr]^(.75){a_1}\\
&\Vdots&\Circ
  \ar@{-}[rr]^(.25){a_n}^(.75){s}&&\Circ \ar@{->}[dd]^(.4)1
  \ar@{->}[rr]^(.25){2}&& {~\ss K_n}\\
  {\ss K_{n-1}~}\ar@{<-}[urr]_(.75){a_{n-1}}
  \\
  &&&&\righttag{}{K}{4pt}
}$$
We will show that $s=2A_{n-1}+1$ satisfies
$(\ast_2)$. 

First notice that $s$ satisfies the inequality of condition
$(\ast_2)$. Moreover, as $n \geq 4$, we have
 $$6 \leq A_{n-1} < A_{n-2} < \ldots<A_1\,,$$
 and for each $i\in \{2,\ldots,n-1\}$, one has $A_{i-1} - A_{i} \ge
 3$. Thus the integers in the semigroup $ \langle A_{j} \ ; \ j
 =1,\ldots,n-1 \rangle $ which are $\leq 2A_{n-1} +1$ must be among $
 A_{n-1}, \ldots, A_1$ and $2A_{n-1}$, and hence divisible by one of
 the $a_i$'s with $i<n-1$. So $2A_{n-1}+1$ cannot be in this
 semigroup, so it satisfies $(\ast_2)$.

 \begin{remark*}
   One can also prove Theorem \ref{main} in the case of a Seifert link
   using only Method \ref{method 1}, with much more complicated
   cases. The advantage of such a proof is that Method 1 can be used
   in any $3$-dimensional manifold with the same underlying splice
   diagram, even those having genus at the nodes and leaves, whereas
   the second method is specific to $\Bbb Z$HS's. So one obtains an
   extension of Theorem \ref{main} to a larger family of normal
   singularity links.

 The idea of the proof in the case of a \Brieskorn{} link $M =\Bbb
 S^3_{\epsilon } \cap \{z_1^p+z_2^q + z_3^r=0\}$ is the following:
 using again the notation $K_1$, $K_2$, $K_3$ for the end-links, the
 generic cases are treated by performing a string of cablings on $K_3$
 giving rise to $x+1$ knots $K'_0, \ldots,K'_{x}$ as in the figure
 below, where $x=2a+1$.
$$\xymatrix@R=25pt@C=36pt@M=0pt@W=0pt@H=0pt{
{\ss K_1~}\ar@{<-}[r]^(.7){p}&\Circ
\ar@{->}[d]_(.3)q\lineto[r]^(.3){r}^(.7){x}&
\Circ\ar@{->}[d]_(.3)1\lineto[r]^(.25)2^(.7){x+2}&
\Circ\ar@{->}[d]_(.3)1\lineto[r]^(.25)2^(.7){x+4}&
\Circ\ar@{->}[d]_(.3)1\ar@{.}[r]&
\Circ\ar@{->}[d]_(.3)1\lineto[r]^(.25)2^(.7){3x-2}&
\Circ\ar@{->}[d]_(.3)1\lineto[r]^(.25)2^(.7){3x}&
\Circ\ar@{->}[d]_(.3)1\ar@{->}[r]^(.25)2&{\ss~K_3}\\
&\undertag{~}{K_2}{6pt}&\undertag{~}{K'_0}{6pt}&
\undertag{~}{K'_1}{6pt}&\undertag{~}{K'_2}{6pt}&
\undertag{~}{K'_{x-2}}{6pt}&\undertag{~}{K'_{x-1}}{6pt}
&\undertag{~}{K'_x}{6pt}\\
}$$\vskip 18pt\noindent
Then we show that, except for a finite numbers of particular cases, we
have the inequality:
$$ \mu(K_3) > \delta({ \mathcal L},K_3),$$
where ${\mathcal L} = \{K_1,K_2, K'_0, \ldots, K'_{x}\}$.
So, by Lemma 1, the (x+4)-coloured link $$K_1 \coprod K_2
\coprod K_3 \coprod K'_0 \coprod \ldots \coprod K'_x$$ does not belong
to $\PAL(M)$. 

The particular cases which cannot be treated by these cablings, but
which can be solved independently by hand are the following:
$(p,q) \in \{ (2,3)$, $(2,5)$, $(2,7)$, $(3,4)$, $(3,5)$, $(4,5)$,
$(4,7)$, $(6,7)\}$.  The details are left to the reader. The proof in
the general Seifert case is a generalization of this one. It is likely
that a similar proof exists for a general normal surface
singularity with $\Bbb Z$HS link, but we have not attempted it.
 \end{remark*}

 \subsection{Proof of Theorem \ref{main} in the non-Seifert case}
\label{subsec:non-seifert}
 
Let us assume that the splice diagram $G$ has at least two
nodes. Choose an \emph{end-node} $(\nu)$ of the splice diagram $G$,
that is, a node which is an end-vertex of the diagram obtained by
removing all leaves from $G$ (so it has at most
a single incident edge which is not a leaf).

\subsubsection{\bf First case:  $(\nu)$ has $4$ or more incident edges} 

Let $n+1$ be the number of incident edges of $\nu$.  Denote by
$a_1<\ldots<a_n$ the weights on the adjacent leaves and by $r$ that on
the remaining adjacent edge.  (We can assume $r>1$ if we want, since
otherwise the diagram fails the semigroup condition, and we have
already proved this case.)

Let $K_1,\ldots,K_n,K_{n+1},\ldots,K_m$ be
end-knots corresponding to all the leaves of $G$, the knots
$K_1,\ldots,K_n$ corresponding to the leaves adjacent to $\nu$. We set
$$A:=\prod_{i< n} a_i;\quad A_j:=\prod_{i \neq j, n} a_i, \text{ for }j=1,\dots,n-1\,.$$

  We argue as before: If there exists $s \in \Bbb N$ such that
$$a_ns > 2r A_{n-1} a_{n-1} \hbox{ and } s\notin 
\langle A, rA_1,rA_2,\ldots,rA_{n-1}\rangle \hskip1cm
(\ast_3),$$ then the $(m+1)$-coloured link $L= K_1 \coprod \ldots
\coprod K_m \coprod K $ does not belong to $\PAL(M)$, where $K$
denotes the $(2,s)$--cabling on $K_n$.
 $$ 
 \xymatrix@R=20pt@C=24pt@M=0pt@W=0pt@H=0pt{
   {G'\quad}\dashto[r]&\lineto[r]^(.5){r}&\overtag{\Circ}{(\nu)}{15pt}
   \ar@{->}[ddl]_(.3){a_1}\ar@{->}[ddr]^(.3){a_{n-1}}
   \lineto[rr]^(.25){a_n}^(.75){s}&&\Circ \ar@{->}[rr]^(.25)2
   \ar@{->}[dd]^(.25){1}&&{~\ss K_n}\\
   &&\dots&&&&\\
   &\undertag{}{K_1}{6pt}&&\undertag{}{K_{n-1}}{6pt}&\undertag{}{K}{6pt}}
$$\vskip 12pt\noindent
We will show that $s=2rA_{n-1}+1$ satisfies $(\ast_3)$. 

As
$a_n>a_{n-1}$, $s=2rA_{n-1}+1$ satisfies the inequality of condition
$(\ast_3)$.
We again have
 $$6\le A_{n-1} < A_{n-2} < \ldots<A_1,$$
and  $A_{i-1} - A_i \ge 3$ for $2\le i \le n-1 $.

Since $rA_i>rA_{n-1}+3r$ for $i<n-1$, the integers in the semigroup
$\Gamma = \langle A, rA_1,rA_2,\ldots,rA_{n-1}\rangle $ which are
$\leq 2rA_{n-1} +1$ must be among
  $$ 2rA_{n-1},~ nA,~ rA_i+nA,\quad n=0,1,2,\dots $$
These are all divisible by some $a_i$ with $i<n-1$ and $2rA_{n-1} +1 $
is not, so it cannot be in the semigroup.

\subsubsection{\bf Second case:  $(\nu)$ has $3$ incident edges} 
We denote by $p<q$ the weights of the two adjacent leaves and by $r$
that of the remaining edge.
 
If $r <q$, then we can use the same argument as in the \Brieskorn{}
case. (In fact the argument is simplified by the fact that the semigroup
in the argument is now smaller than $\langle r,p\rangle$ since it is
contained in the semigroup generated by $r,2p,3p,\dots$. So some cases
of the earlier argument are not needed and a $(2,2t+1)$--cabling at the
$q$--weighted leaf works with $t = \min (p,r)$ unless $r=2p+1$, in which
case $(2,2p+3)$--cabling works.)
 
Let us now assume $p<q<r$. Let $K_1$ and $K_2$ be end-knots
corresponding to the leaves weighted by $p$ and $q$ respectively, and
let $K_3,\ldots,K_m$ be end-knots corresponding to the other leaves of
the splice diagram $G$.

For each $i=3,\ldots,m$, let $\alpha'_{i}$ be the product of the
weights adjacent to the path joining the leaf $K_2$ to the leaf
$K_i$. As $p$ divides $\alpha'_i$, we set $\alpha'_i =p \alpha_i $,
where $\alpha_i \geq 2$. If there exists $s \in \Bbb N$ such that
$$qs > 2pr \hbox{ and } s\notin \langle r,\alpha_3p, \ldots,p\alpha_m 
\rangle \hskip1cm (\ast_4),
$$ 
then the $(m+1)$--coloured link $L= K_1 \coprod K_2 \coprod K_3 \coprod
\ldots \coprod K_m \coprod K $ does not belong to $\PAL(M)$, where $K$
is the $(2,s)$--cabling on $K_2$:
 $$ 
 \xymatrix@R=20pt@C=24pt@M=0pt@W=0pt@H=0pt{
   {G'\quad}\dashto[r]&\lineto[r]^(.5){r}&{\Circ}
   \ar@{->}[dd]_(.3){p}
   \lineto[rr]^(.25){q}^(.75){s}&&\Circ \ar@{->}[rr]^(.25)2
   \ar@{->}[dd]^(.25){1}&&{~\ss K_2}\\
   &&&&&&\\
   &&\undertag{}{K_1}{6pt}&&\undertag{}{K}{6pt}}
$$\vskip 12pt\noindent

As $p<q$, $s=2r +1$ and $s=2r+3$ both satisfy the inequality of
condition $(\ast_4)$.
Assume that $2r +1 $ and $2r +3 $ both belong to the semigroup
$\langle r,\alpha_3p, \ldots,p\alpha_m \rangle$. There then exist
$\kappa,\gamma \in \langle \alpha_3,\ldots,\alpha_m\rangle$ such that
\begin{align*}
   2r+1 = \kappa p\quad&\text{or}\quad 2r+1 = r + \kappa p,\quad\text{and}\\  
   2r+3 = \gamma p \quad &\text{or}\quad 2r+3 = r +  \gamma p \,.
\end{align*}
(The possibilities $2r+3=2r+\gamma p$ and $2r+3=3r$ are ruled out by
the facts $\gamma>1$ and $r\ge 5$.)  
Thus there are four possible cases:
\case1 $ 2r+1 = \kappa p$ and $2r+3 = \gamma p$. Then $2=(\gamma -
  \kappa)p$, so $p=2$, so $2r+1=2\kappa$. Contradiction.
\case2 $2r+1 = r + \kappa p $ and $2r+3 = \gamma p $, 
which leads to $1=(\gamma - 2\kappa)p$. Contradiction.
\case3 $2r+1 = \kappa p $ and $2r+3 = r+ \gamma p $, which leads to
  $5=(2 \gamma - \kappa)p$. Therefore $p=5$. As $2\leq p<q<r$,
  $s=2r-1$ satisfies the inequality of condition $(\ast_4)$. Assume
  that $2r-1$ and $2r+1$ both belong to $\langle r,5 \alpha_3,
  \ldots,5\alpha_m \rangle$. There then exist $\lambda,\delta \in
  \langle \alpha_3,\ldots,\alpha_m\rangle$ such that
  \begin{align*}
2r+1 = 5\lambda\quad&\text{or}\quad 2r+1 = r + 5\lambda,\quad\text{and}\\ 
2r-1 = 5\delta \quad&\text{or}\quad 2r-1 = r + 5\delta\,.  
  \end{align*}
This leads to four possible cases:
 \begin{enumerate}
\item   $ 2r+1 = 5\lambda$ and  $2r-1 = 5\delta $. 
Then $2=5(\lambda-\delta)$. Contradiction.
\item $2r+1 = r + 5\lambda $ and $2r-1 =5 \delta $, which leads to
  $3=5(2\lambda - \delta )$.  Contradiction.
\item $2r+1 = r + 5\lambda$ and $2r-1 = r + 5 \delta $, which leads to
  $2=5(\lambda-\delta)$.  Contradiction
\item $2r+1 =  5\lambda $ and $2r-1 = r +5\delta  $, which leads to $3=5( \lambda- 2 \delta)$.  Contradiction
\end{enumerate}
\case4 $2r+1 = r + \kappa p$ and $2r+3 = r + \gamma p$. This leads to
  $2=(\gamma - \kappa)p$, so $p=2$.
If $q\ge 5$ then  $s=r+2$ satisfies condition $(\ast_4)$. So we may
assume $q=3$. Then $r\ge 5$ and for $r=5$ we can take $s=7$, so we may
assume $r\ge 7$.  

Since we have dealt with all other possibilities, we can assume now
that \emph{every} end-node of $G$ has this form, i.e., it is valence $3$
with two leaves with weights $2$ and $3$ and the third edge having weight
$\ge 7$.

Let $(\nu)$ now be an end-node of the graph obtained by deleting the
$2$-- and $3$--weighted leaves at the end-nodes of $G$.  So the
picture is as follows:
 $$ 
 \xymatrix@R=20pt@C=24pt@M=0pt@W=0pt@H=0pt{
   {G'\quad}\dashto[r]&\lineto[r]^(.5){r}&\overtag{\Circ}{(\nu)}{15pt}
   \lineto[ddl]_(.3){a_1}_(.75){r_1}\lineto[ddr]^(.3){a_{n-1}}^(.75){r_{n-1}}
   \lineto[rr]^(.25){a_n}^(.75){r_n}&&\Circ \ar@{-->}[d]^(.5)2
   \ar@{-->}[r]^(.4){3}&\\
   &&\dots&&&&\\
   &\Circ\ar@{-->}[d]_(.4)2\ar@{-->}[r]_(.4)3&&\Circ\ar@{-->}[d]_(.4)2\ar@{-->}[r]_(.4)3&\\
   &&&}$$ The dashed pairs of $(2,3)$--weighted leaves may or may not
 exist (but at least one pair must exist; note that the weight $r_i$
 is $\ge7$ but is omitted if the $(2,3)$--pair of leaves does not
 exist at that vertex).  We assume $a_1<\dots<a_n$, and denote, as
 usual, $A=a_1\dots a_{n-1}$ and $A_j=A/a_j$ for $j=1,\dots,n-1$.

 We first consider the case that there is no $(2,3)$--pair at
 the end of the $a_n$--weighted edge:
 $$ 
 \xymatrix@R=20pt@C=24pt@M=0pt@W=0pt@H=0pt{
   {G'\quad}\dashto[r]&\lineto[r]^(.5){r}&\Circ
   \lineto[ddl]_(.3){a_1}_(.75){r_1}\lineto[ddr]^(.3){a_{n-1}}^(.75){r_{n-1}}
   \ar@{->}[rr]^(.25){a_n}&&\\
   &&\dots&&&&\\
   &\Circ\ar@{-->}[d]_(.4)2\ar@{-->}[r]_(.4)3&&\Circ\ar@{-->}[d]_(.4)2\ar@{-->}[r]_(.4)3&\\
   &&&}$$ Then the same arguments as before reduce us to the case
that $n=2$, $a_1=2$, $a_2=3$, and then the following cabling does not
 satisfy the semigroup condition and thus resolves this case:
 $$ 
 \xymatrix@R=15pt@C=24pt@M=0pt@W=0pt@H=0pt{
   {G'\quad}\dashto[r]&\lineto[r]^(.5){r}&\Circ
   \lineto[dd]_(.3){2}_(.75){r_1}
   \ar@{-}[rr]^(.25){3}^(.7){2r+1}&&
\Circ\ar@{->}[d]^(.5)1\ar@{->}[r]^(.4)2&\\
   &&&&&&\\
   &&\Circ\ar@{->}[d]_(.4)2\ar@{->}[r]_(.4)3&&&\\
   &&&}$$
Indeed, the relevant semigroup is generated by $2r$, $3r$, and a
subset of $2\mathbb N$, so it does not contain $2r+1$.

We may now assume there is a $(2,3)$--pair at the $a_n$--weighted
edge. If there is an integer $k$ with $\frac{r_n}6>k>\frac{rA}{a_n}$
then the following internal cabling gives an admissible splice diagram
for $M$, which fails the semigroup condition since $1\notin\langle2,3\rangle$:
 $$ 
 \xymatrix@R=20pt@C=24pt@M=0pt@W=0pt@H=0pt{
   {G'\quad}\dashto[r]&\lineto[r]^(.5){r}&\Circ
   \lineto[ddl]_(.3){a_1}_(.75){r_1}\lineto[ddr]^(.3){a_{n-1}}^(.75){r_{n-1}}
   \lineto[rr]^(.25){a_n}^(.75){k}&&\Circ\ar@{->}[d]^(.4)1
   \lineto[rr]^(.25){1}^(.75){r_n}&&
   \Circ\ar@{->}[d]^(.5)2\ar@{->}[r]^(.4){3}&\\
   &&\dots&&&&&&\\
   &\Circ\ar@{-->}[d]_(.4)2\ar@{-->}[r]_(.4)3&&\Circ\ar@{-->}[d]_(.4)2\ar@{-->}[r]_(.4)3&\\
   &&&}$$ So we may assume that there is no integer $k$ with
 $\frac{r_n}6>k>\frac{rA}{a_n}$ (but $\frac{r_n}6>\frac{rA}{a_n}$).  In particular,  $rA_{n-1}\ge \left\lceil
 \frac{r_n}6\right\rceil$, since $rA_{n-1}$ is an integer larger
 than $\frac{rA}{a_n}$.  We thus have:
$$ rA_1>\dots>rA_{n-1}\ge \left\lceil
 \frac{r_n}6\right\rceil\,.$$
Consider the cabling:
 $$ 
 \xymatrix@R=20pt@C=24pt@M=0pt@W=0pt@H=0pt{
   {G'\quad}\dashto[r]&\lineto[r]^(.5){r}&\Circ
   \lineto[ddl]_(.3){a_1}_(.75){r_1}\lineto[ddr]^(.3){a_{n-1}}^(.75){r_{n-1}}
   \lineto[rr]^(.25){a_n}^(.75){r_n}&&\Circ\ar@{->}[d]^(.4)2
   \lineto[rr]^(.25){3}^(.75){s}&&
   \Circ\ar@{->}[d]^(.5)1\ar@{->}[r]^(.4){2}&\\
   &&\dots&&&&&&\\
   &\Circ\ar@{-->}[d]_(.4)2\ar@{-->}[r]_(.4)3&&\Circ\ar@{-->}[d]_(.4)2\ar@{-->}[r]_(.4)3&\\
   &&&}$$ in which we need $4r_n<3s$ to have an admissible
 diagram.  Let $s=r_n+2x$ with $x=\left\lceil \frac{r_n}6\right\rceil+\epsilon$, $\epsilon\in\{0,1\}$. Then
 $3s=3r_n+6x> 4r_n$, as desired. Moreover, since $r_n>6 $, we have
 $x<r_n$.

 Since $s=r_n+2x$ is odd and $x<r_n$, if $s$ satisfies the semigroup
 condition then $x$ must be in the semigroup generated by
 $rA_1,\dots,rA_{n-1}$ and $A$.   Thus if both $x=\left\lceil
 \frac{r_n}6\right\rceil$ and $x=\left\lceil \frac{r_n}6\right\rceil+1$ are in this
 semigroup there are just two possibilities:
 \begin{align*}
   \left\lceil \frac{r_n}6\right\rceil=rA_{n-1} \quad&\text{and}\quad A\text{
     divides }\left\lceil \frac{r_n}6\right\rceil+1\,,\quad\text{or}\\ 
   \left\lceil \frac{r_n}6\right\rceil+1=rA_{n-1} \quad&\text{and}\quad A\text{
     divides }\left\lceil \frac{r_n}6\right\rceil\,.
 \end{align*}
 Both cases imply $n=2$, since otherwise $a_1$ is a divisor of both
 $\left\lceil \frac{r_n}6\right\rceil$ and $\left\lceil
 \frac{r_n}6\right\rceil+1$. Moreover, they imply there is no
$(2,3)$--pair at the end of the $a_1$--edge, since if there were then
$rA_{n-1}=r$ would not be available in the semigroup. 

Now suppose that $x=\left\lceil \frac{r_n}6\right\rceil+2$ is also in
the semigroup. In the first case $x=r+2$; the only possibilities are
$r=2$ (so $a_1=3$ since $A=a_1$ divides $r+1$) or $a_1=2$. In either
case we need $a_1$ to be in the semigroup, so the $r$--weighted edge
is a leaf, so node $(\nu)$ is an end-node, so the weights of its
leaves are $2$ and $3$ and $a_2\ge 7$.  The second case similarly
implies that node $(\nu)$ must be an end-node so its pair of leaves is
$(2,3)$--weighted and the third weight $a_2$ is $\ge7$.

Since $a_2\ge7$ the inequalities
$\frac{r_n}6>k>\frac{rA}{a_n}~(=\frac6{a_2})$ are satisfied by $k=1$,
so we are in a case which we had already dealt with, and the proof is
complete.\qed

\end{document}